\DeclareMathOperator{\Aut}{\mathrm Aut}
\DeclareMathOperator{\CM}{\mathrm CM}
\DeclareMathOperator{\Hom}{\mathrm Hom}
\DeclareMathOperator{\Tor}{\mathrm Tor}
\begin{document}
 \bibliographystyle{plain}

 \newtheorem{theorem}{Theorem}[section]
 \newtheorem{lemma}{Lemma}[section]
 \newtheorem{corollary}{Corollary}[section]
 \newtheorem{conjecture}{Conjecture}[section]
 \newtheorem{question}{Question}[section]
 
 \newcommand\house[1]{%
     \begingroup\setlength\arraycolsep{0pt}
     \begin{array}[t]{@{}c@{} | c |@{}c@{}}
     \firsthline
     &\;#1\;{}&
     \end{array}
     \endgroup
 }
 \newcommand{\mc}{\mathcal}
 \newcommand{\mbb}{\mathbb}
 \newcommand{\fA}{\mathfrak A}
 \newcommand{\B}{\mc B}
 \newcommand{\fB}{\mathfrak B}
 \newcommand{\cC}{\mc C}
 \newcommand{\D}{\mc D}
 \newcommand{\E}{\mc E}
 \newcommand{\F}{\mc F}
 \newcommand{\G}{\mc G}
 \newcommand{\hH}{\mc H}
 \newcommand{\fG}{\mathfrak G}
 \newcommand{\fI}{\mathfrak I}
 \newcommand{\I}{\mc I}
 \newcommand{\J}{\mc J}
 \newcommand{\K}{\mc K}
 \newcommand{\lL}{\mc L}
 \newcommand{\M}{\mc M}
 \newcommand{\fM}{\mathfrak M}
 \newcommand{\pp}{\mc P}
 \newcommand{\fP}{\mathfrak P}
 \newcommand{\fR}{\mathfrak R}
 \newcommand{\rR}{\mc R}
 \newcommand{\fS}{\mathfrak S}
 \newcommand{\sS}{\mc S}
 \newcommand{\U}{\mc U}
 \newcommand{\uU}{\mathfrak U}
 \newcommand{\X}{\mc X}
 \newcommand{\Y}{\mc Y}
 \newcommand{\A}{\mathbb{A}}
 \newcommand{\C}{\mathbb{C}}
 \newcommand{\pP}{\mathbb{P}}
 \newcommand{\Q}{\mathbb Q}
 \newcommand{\R}{\mathbb R}
 \newcommand{\T}{\mathbb T}
 \newcommand{\Z}{\mathbb{Z}}
 \newcommand{\ahat}{\widehat\alpha}
 \newcommand{\bhat}{\widehat\beta}
 \newcommand{\fhat}{\widehat f}
 \newcommand{\ghat}{\widehat g}
 \newcommand{\hhat}{\widehat h}
 \newcommand{\wsigma}{\widetilde{\sigma}}
 \newcommand{\wtau}{\widetilde{\tau}}
 \newcommand{\p}{\boldsymbol{\varphi}}
 \newcommand{\h}{\tfrac{1}{2}}
 \newcommand{\hh}{\frac{1}{2}}
 \newcommand{\ba}{\boldsymbol a}
 \newcommand{\bb}{\boldsymbol b}
 \newcommand{\be}{\boldsymbol e}
 \newcommand{\bm}{\boldsymbol m}
 \newcommand{\bn}{\boldsymbol n}
 \newcommand{\bu}{\boldsymbol u}
 \newcommand{\bv}{\boldsymbol v}
 \newcommand{\bw}{\boldsymbol w}
 \newcommand{\bx}{\boldsymbol x}
 \newcommand{\bwy}{\boldsymbol y}
 \newcommand{\bL}{\boldsymbol L}
 \newcommand{\bta}{\boldsymbol \beta}
 \newcommand{\bet}{\boldsymbol \eta}
 \newcommand{\bxi}{\boldsymbol \xi}
 \newcommand{\bo}{\boldsymbol 0}
 \newcommand{\bid}{\boldsymbol 1}
 \newcommand{\ep}{\varepsilon}
 \newcommand{\vphi}{\varphi}
  \newcommand{\eeta}{\mu}
 \newcommand{\dlambda}{\text{\rm d}\lambda}
 \newcommand{\dbeta}{\text{\rm d}\beta}
 \newcommand{\dmu}{\text{\rm d}\mu}
 \newcommand{\dr}{\text{\rm d}r}
 \newcommand{\du}{\text{\rm d}u}
 \newcommand{\dv}{\text{\rm d}v}
 \newcommand{\dt}{\text{\rm d}t}
 \newcommand{\dw}{\text{\rm d}w}
 \newcommand{\dx}{\text{\rm d}x}
 \newcommand{\dy}{\text{\rm d}y}
 \newcommand{\dxi}{\text{\rm d}\xi}
 \newcommand{\oQ}{\overline{\Q}}
 \newcommand{\oq}{\oQ^{\times}}
 \newcommand{\oQt}{\oQ^{\times}/\Tor\bigl(\oQ^{\times}\bigr)}
 \newcommand{\ot}{\Tor\bigl(\oQ^{\times}\bigr)}

\def\housealp{{%
    \setbox2=\hbox{$\alpha$}
    \vrule height \dimexpr\ht2+1.75pt width .4pt depth \dp2\relax
    \vrule height 6.05pt width \dimexpr\wd2+2pt depth -5.65pt
    \llap{$\alpha$\kern1pt}
    \vrule height \dimexpr\ht2+1.75pt width .4pt depth \dp2\relax
}}
\def\houseeta{{%
    \setbox2=\hbox{$\eeta$}
    \vrule height \dimexpr\ht2+1.75pt width .4pt depth \dp2\relax
    \vrule height 6.05pt width \dimexpr\wd2+2pt depth -5.65pt
    \llap{$\eeta$\kern1pt}
    \vrule height \dimexpr\ht2+1.75pt width .4pt depth \dp2\relax
}}

\title[heights of generators]
{Small integral  generators of totally complex number fields}
\author{Shabnam Akhtari, Jeffrey~D.~Vaaler and Martin Widmer}
\subjclass[2010]{11H06, 11R29, 11R56}
\keywords{height, integral generators, $\CM$-fields, roots of unity}
\address{Department of Mathematics, Pennsylvania State University, University Park, PA 16802 USA}
\email{akhtari@psu.edu}

\address{Department of Mathematics, University of Texas, Austin, TX 78712 USA}
\email{vaaler@math.utexas.edu}

\address{Graz University of Technology, Institute of Analysis and Number Theory, Steyrergasse 30/II, 8010 Graz, Austria}
\email{martin.widmer@tugraz.at}

\allowdisplaybreaks
\numberwithin{equation}{section}

\begin{abstract}  Let $K$ be an algebraic number field and $H$ the absolute Weil height.  Write $c_K$ for a certain positive constant that is 
an invariant of $K$.  We consider the question: does $K$ contain an algebraic integer $\alpha$ such that both $K = \Q(\alpha)$ and 
$H(\alpha) \le c_K$?  If $K$ has a real embedding then a positive answer was established in previous work.  Here we obtain a positive 
answer if $\Tor\bigl(K^{\times}\bigr) \not= \{\pm 1\}$, and so $K$ has only complex embeddings.  We also show that if the answer is negative, 
then $K$ is totally complex, $\Tor\bigl(K^{\times}\bigr) = \{\pm 1\}$, and $K$ is a Galois extension of its maximal totally real subfield.
Further, we show that if $\mu \in O_K$ is not totally real, then there exists $\alpha$ in $O_K$ with $K = \Q(\alpha)$ and 
$H(\alpha) \le H(\mu)\thinspace c_K$. 
\end{abstract}

\maketitle


\section{Introduction}

Let $K$ be an algebraic number field of degree $d = [K : \Q]$, and let $\Delta_K$ be the discriminant of $K$.  We define the positive constant
\begin{equation}\label{early3}
c_K = \biggl(\frac{2}{\pi}\biggr)^{s/d} \bigl|\Delta_K\bigr|^{1/2d},
\end{equation}  
where $s$ is the number of complex places of $K$.  We write $K^{\times}$ for the multiplicative group of nonzero elements of $K$, and
\begin{equation}\label{early7}
H : K^{\times} \rightarrow [1, \infty)
\end{equation}  
for the absolute, multiplicative Weil height defined in (\ref{extra261}).

In \cite[Question 2]{ruppert1998} W.~Ruppert asked the following question:

\begin{question} {\sc [Ruppert, 1998]}\label{con1}  Does there exist a positive constant $A = A(d)$ such that if $K$ is an algebraic 
number field of degree $d$ over $\Q$, then there exists an element $\alpha$ in $K$ such that 
\begin{equation*}\label{early12}
K = \Q(\alpha),\quad\text{and}\quad H(\alpha) \le A \bigl|\Delta_K\bigr|^{1/2d}\thinspace ?
\end{equation*} 
\end{question}

Ruppert stated his question using the naive height.  However, it follows from elementary inequalities for heights that the variant we 
have stated here is equivalent to the question originally asked by Ruppert.  It is known that there always exists a generator whose height is at most $|\Delta_K|^{1/d}$, see, e.g., \cite[Lemma 7.1]{PazukiWidmer2021}.

In \cite[Proposition 2]{ruppert1998} Ruppert obtained a positive 
answer to his question when $[K : \Q] = 2$.  He also proved that if $K$ is a real quadratic extension of $\Q$, then the generator 
$\alpha$ can be selected from the ring $O_K$ of algebraic integers in $K$.  In \cite{vaaler2013} the second and third named authors provided the 
following partial answer to Ruppert's question:

\begin{theorem}\label{thmearly1}  Assume that $K$ has an embedding into $\R$.  Then there exists an algebraic integer $\alpha$ in $O_K$
such that
\begin{equation}\label{early16}
K = \Q(\alpha),\quad\text{and}\quad H(\alpha) \le c_K.
\end{equation}
\end{theorem}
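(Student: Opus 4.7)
The plan is to deploy Minkowski's theorem on a convex symmetric body in the Minkowski space of $K$ that singles out a chosen real embedding $\sigma_1$, and to extract a primitive element whose height is controlled by the volume of that body.

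First, I would fix a real embedding $\sigma_1 : K \to \R$ (which exists by hypothesis), and label the remaining archimedean absolute values $|\cdot|_v$ with local degrees $d_v = 1$ or $2$. For a parameter $\ep > 0$ and $T > 0$ to be chosen, consider the convex symmetric body
\begin{equation*}
B_{T,\ep} = \bigl\{\,x \in \R^r \times \C^s : |x_1| \le T,\ |x_i| \le 1-\ep \text{ for } i=2,\dots,r,\ |z_j| \le 1-\ep \text{ for } j=1,\dots,s\,\bigr\}.
\end{equation*}
Its volume is $2^r \pi^s T(1-\ep)^{d-1}$, while the covolume of $O_K$ under the Minkowski embedding is $2^{-s}|\Delta_K|^{1/2}$. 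Setting $T = (2/\pi)^s|\Delta_K|^{1/2}(1-\ep)^{-(d-1)}$ makes $\mathrm{vol}(B_{T,\ep}) = 2^d\cdot 2^{-s}|\Delta_K|^{1/2}$, so Minkowski's theorem produces a nonzero $\alpha_\ep \in O_K$ lying in $B_{T,\ep}$.

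Next I would show that $\alpha_\ep$ generates $K$. Because $\alpha_\ep$ is a nonzero algebraic integer, $|N_{K/\Q}(\alpha_\ep)| \ge 1$, and combined with $|\sigma_i(\alpha_\ep)| \le 1-\ep$ for all $i \ne 1$ this forces $|\sigma_1(\alpha_\ep)| \ge (1-\ep)^{-(d-1)} > 1-\ep \ge |\sigma_i(\alpha_\ep)|$ for every $i \ne 1$. Thus $\sigma_1(\alpha_\ep)$ is strictly larger in absolute value than any other conjugate, so it is a $\Q$-conjugate of multiplicity one, which forces $[\Q(\alpha_\ep) : \Q] = d$ and hence $K = \Q(\alpha_\ep)$.

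Finally, since $\alpha_\ep$ is an algebraic integer and all archimedean absolute values other than $|\sigma_1(\cdot)|$ are strictly less than $1$,
\begin{equation*}
H(\alpha_\ep)^d = \max\bigl(1,|\sigma_1(\alpha_\ep)|\bigr) \le T = c_K^d (1-\ep)^{-(d-1)}.
\end{equation*}
To convert the near-optimal bound into an exact one, I would let $\ep \to 0^+$. The bound $H(\alpha_\ep) \le 2 c_K$ (say) holds for all small $\ep$, and by Northcott's theorem the set of $\alpha \in O_K$ with $K = \Q(\alpha)$ and $H(\alpha) \le 2c_K$ is finite. Hence some fixed $\alpha$ occurs as $\alpha_\ep$ along a sequence $\ep \to 0$, and for that $\alpha$ we obtain $H(\alpha) \le c_K$, completing the proof. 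The only delicate point is the passage to the limit, where Northcott's finiteness is essential; the rest is a standard Minkowski/norm-bound argument, made possible precisely by the existence of the real embedding $\sigma_1$ that can be used to anchor the asymmetric body.
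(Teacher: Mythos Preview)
Your argument is correct. Note, however, that the paper does not actually prove Theorem~\ref{thmearly1}: it is quoted from \cite{vaaler2013} and used as input to the proofs of Theorems~\ref{thmearly2} and~\ref{thmearly3}. That said, your proof is precisely the argument one would expect in \cite{vaaler2013}, and it matches the machinery the present paper develops for its own results. Your Minkowski/Northcott step is exactly Lemma~\ref{lemhaar1} specialized to a real place $w$ with all $B_v = 1$ (compare your $(1-\ep)$ with the paper's $(1+\ep)$ and your limiting argument with the one following (\ref{extra459})). The one ingredient you add beyond Lemma~\ref{lemhaar1} is the observation that when $w$ is real, the strict inequality $|\sigma_1(\alpha_\ep)| > |\sigma_i(\alpha_\ep)|$ for all $i\neq 1$ forces $\Q(\alpha_\ep)=K$; in the paper's framework this is the counterpart of Lemma~\ref{lemhaar2}, which handles the totally complex case instead and shows that failure to generate forces $[K:\Q(\xi^{(w)})]=2$ with $\sigma_w(\xi^{(w)})\in\R$---impossible here since $\sigma_1$ is already real and $K$ has a real embedding. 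So your route and the paper's toolkit are the same; the paper simply outsources this particular case to the earlier reference.
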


In Theorem \ref{thmearly1} the generator $\alpha$ is an algebraic integer, a requirement that was {\it not} stated in Ruppert's question, 
while the height of $\alpha$ is bounded in a manner that was anticipated in Ruppert's question. Hence Theorem \ref{thmearly1} generalizes 
Ruppert's earlier result to number fields $K$ that have at least one real embedding.
 
Ruppert's question has also been answered in the  affirmative for abelian number fields of sufficiently large degree \cite[Theorem 1]{Widmer2025}. Moreover, it is known that the exponent $1/2d$ cannot be replaced by a smaller value,
at least not if $d$ is even \cite[Theorem 2]{Widmer2025}.

In this note we establish a positive answer to Ruppert's question for a new class of number fields. 

\begin{theorem}\label{thmearly2}  Assume that $K$ is a number field, and $\mu$ is an algebraic integer in $O_K$ that is not totally
real.  Then there exists an algebraic integer $\alpha$ in $O_K$ such that 
\begin{equation}\label{early22}
K = \Q(\alpha),\quad\text{and}\quad H(\alpha) \le H(\mu)\thinspace c_K.
\end{equation}
\end{theorem}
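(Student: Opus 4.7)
The plan is to prove Theorem \ref{thmearly2} by a Minkowski-style geometry-of-numbers argument on $O_K$, using a symmetric convex body in the Minkowski space $K_\R$ whose widths at each archimedean place are calibrated by the local factors of $\mu$. I first make an easy reduction: if $K$ admits a real embedding, then Theorem \ref{thmearly1} already yields $\alpha \in O_K$ with $K = \Q(\alpha)$ and $H(\alpha) \le c_K$, and since $H(\mu) \ge 1$ by (\ref{early7}) this gives $H(\alpha) \le H(\mu)c_K$. Henceforth I may assume $K$ is totally complex, so $d = 2s$.

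For the totally complex case, I consider the symmetric convex body
$$B_\mu = \bigl\{\beta \in K_\R : |\sigma_v(\beta)| \le c_K\,\max(1, |\sigma_v(\mu)|) \text{ for every archimedean place } v\bigr\}.$$
A direct computation using $c_K^d = (2/\pi)^s |\Delta_K|^{1/2}$ together with $\prod_{v \mid \infty} \max(1, |\sigma_v(\mu)|)^{d_v} = H(\mu)^d$ shows that the volume of $B_\mu$ equals $2^d H(\mu)^d$ times the covolume of $O_K$ in $K_\R$. Since $H(\mu) \ge 1$, Minkowski's convex body theorem produces a non-zero $\alpha \in O_K \cap B_\mu$. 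Using the inequality $c_K \ge 1$ (a consequence of the Minkowski bound on discriminants), the defining inequalities of $B_\mu$ yield $\max(1, |\sigma_v(\alpha)|) \le c_K \max(1, |\sigma_v(\mu)|)$ at every archimedean $v$, and taking the weighted product over archimedean places delivers the target bound $H(\alpha) \le c_K H(\mu)$.

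The main obstacle is arranging that the $\alpha$ produced by Minkowski's theorem actually generates $K$, rather than lying in a proper subfield $F \subsetneq K$. This is the step at which the hypothesis that $\mu$ is not totally real must do its work, and where the argument must diverge from the one underlying Theorem \ref{thmearly1}. My intended strategy is an excluded-subfield argument: for each proper subfield $F \subsetneq K$, bound the number of $O_F$-points in the lower-dimensional slice $B_\mu \cap F_\R$ by a Minkowski-type estimate, and show that these contributions (summed over the finitely many proper subfields) are strictly smaller than the count of $O_K$-points in $B_\mu$ supplied by a successive-minima or lattice-point-counting refinement of Minkowski's theorem. The non-totally-real hypothesis enters because $\mu \notin F$ whenever $F$ is totally real, and at the archimedean places $v$ where $\sigma_v(\mu) \notin \R$ the slice $B_\mu \cap F_\R$ is genuinely thinner than $B_\mu$, furnishing the required gain; non-totally-real proper subfields $F$ should then be controlled through discriminant comparisons between $F$ and $K$. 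Carrying out this bookkeeping across all proper subfields is the part of the proof I expect to demand the most technical care.
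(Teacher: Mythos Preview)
Your reduction to the totally complex case and the height bound for points of $O_K$ lying in your symmetric body $B_\mu$ are fine, but the heart of the matter---forcing the Minkowski point to generate $K$---is left as a counting argument that is neither carried out nor, as stated, likely to succeed. The difficulty is sharpest when $H(\mu)=1$ (for instance when $\mu$ is a root of unity, the main case behind Corollary~\ref{corearly1}): then $B_\mu$ has volume exactly $2^d$ times the covolume of $O_K$, so Minkowski's theorem promises only a single pair $\pm\alpha$, and there is nothing to spare for an inclusion--exclusion over proper subfields. Your claim that the slice $B_\mu\cap F_\R$ is ``genuinely thinner'' at places where $\sigma_v(\mu)\notin\R$ is also not quite right: the real slice of a disk of radius $r$ is an interval of length $2r$, so no advantage is gained from the non-reality of $\sigma_v(\mu)$ in the volumes themselves. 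And for subfields $F$ that are not totally real, ``discriminant comparisons'' is only a hope, not an argument.

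The paper avoids counting entirely by using an \emph{asymmetric} body. Fix one complex place $w$ with $\sigma_w(\mu)\notin\R$ and take $B_v=\max\{1,|\mu|_v\}^{-1}\le 1$ for $v\mid\infty$, $v\ne w$; Lemma~\ref{lemhaar1} then produces $\xi^{(w)}\in O_K$ with $|\xi^{(w)}|_v<1$ for $v\ne w$ and $H(\xi^{(w)})=|\xi^{(w)}|_w\le c_K\prod_{v\ne w}\max\{1,|\mu|_v\}$. The key structural fact (Lemma~\ref{lemhaar2}) is that any nonzero $\xi\in O_K$ with $|\xi|_v<1$ for all $v\ne w$ either generates $K$ or generates a subfield $k$ with $[K:k]=2$ and $\sigma_w(k)\subseteq\R$. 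So if $\xi^{(w)}$ fails to generate $K$, then $\sigma_w(\xi^{(w)})\in\R$; but then $\alpha=\mu\,\xi^{(w)}$ still satisfies $|\alpha|_v<1$ for $v\ne w$, and now $\sigma_w(\alpha)=\sigma_w(\mu)\sigma_w(\xi^{(w)})\notin\R$, so Lemma~\ref{lemhaar2} forces $\Q(\alpha)=K$. The height bound $H(\alpha)=|\alpha|_w\le c_K H(\mu)$ drops out immediately. This two-step construction (find $\xi^{(w)}$, then multiply by $\mu$ if necessary) is where the hypothesis ``$\mu$ not totally real'' is actually used, and it replaces your proposed subfield-enumeration entirely.
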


Let $\Tor\bigl(K^{\times}\bigr)$ denote the torsion subgroup of the multiplicative group $K^{\times}$.  This group is known to be a finite, cyclic group 
of even order $2 q_K$, where $q_K$ is a positive divisor of $\Delta_K$ (see \cite[Proposition 3.11]{Narkieweicz2004}).
The following simple corollary illustrates how Theorem \ref{thmearly2}
can be applied.

\begin{corollary}\label{corearly1}  Assume that $K$ is a number field such that 
\begin{equation*}
\Tor\bigl(K^{\times}\bigr) \neq  \{\pm 1\}.
\end{equation*}
Then there exists $\alpha$ in $O_K$ such that 
\begin{equation*}
K = \Q(\alpha), \quad \text{and}\quad H(\alpha) \le c_K.
\end{equation*}
\end{corollary}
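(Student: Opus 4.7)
The plan is to derive the corollary as an immediate application of Theorem \ref{thmearly2}, with a judicious choice of the integer $\mu$. The hypothesis $\Tor(K^\times) \neq \{\pm 1\}$ guarantees the existence of a root of unity $\mu \in K$ whose order $n$ strictly exceeds $2$, and such a $\mu$ will turn out to be a perfect input for Theorem \ref{thmearly2}.

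First I would record the three properties of $\mu$ that are needed. Since $\mu$ is a root of unity, it satisfies a monic integer polynomial (a cyclotomic polynomial), so $\mu \in O_K$. Next, every conjugate of $\mu$ over $\Q$ is itself a primitive $n$-th root of unity, and for $n > 2$ no primitive $n$-th root of unity is real; in particular, under any embedding of $K$ into $\C$ the image of $\mu$ is a non-real complex number, so $\mu$ is not totally real. Finally, roots of unity have trivial Weil height, so $H(\mu) = 1$.

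With these properties in hand, Theorem \ref{thmearly2} applies directly to $\mu$, producing $\alpha \in O_K$ with $K = \Q(\alpha)$ and
\begin{equation*}
H(\alpha) \le H(\mu)\, c_K = c_K,
\end{equation*}
which is exactly the conclusion of the corollary. The only subtle point worth emphasizing is the observation that $\Tor(K^\times) \neq \{\pm 1\}$ forces the presence of a torsion element of order at least $3$, and that any such element is automatically non-totally-real; once this is said, there is no obstacle. In short, the main (and only) ingredient is Theorem \ref{thmearly2}, and the corollary is obtained by plugging in any non-trivial, non-$\pm 1$ root of unity contained in $K$.
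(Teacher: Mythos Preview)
Your proof is correct and follows exactly the same approach as the paper: select a root of unity $\mu \in \Tor(K^{\times})$ of order at least $3$, observe that $\mu \in O_K$, that $\mu$ is not totally real, and that $H(\mu) = 1$, and then apply Theorem~\ref{thmearly2}.
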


To derive Corollary \ref{corearly1} from Theorem \ref{thmearly2} we select $\mu$ to be an element of the group $\Tor\bigl(K^{\times}\bigr)$
having order greater than or equal to $3$.   It follows that $\mu$ is a complex (and not real) root of unity and therefore $H(\mu) = 1$.  In this 
example, $\mu$ has no real conjugates over $\Q$ and therefore the field $K$ containing $\mu$ is totally complex.

Another implication of  Theorem \ref{thmearly2} is an affirmative answer to Question \ref{con1}
(with $A = 2$, say) 
whenever the field 
$K$ contains a non-quadratic algebraic integer of the form $n^{1/m}$ with integers $m \geq n \geq 2$. In Theorem \ref{thmearly2}  we may take $\mu = n^{1/m}$, for which we have $H(\mu) \leq 2$.


The constructive method used in the proof of Theorem \ref{thmearly2} enables us to obtain a result in which we identify a class of 
number fields that might not have a small integral generator. 

\begin{theorem}\label{thmearly3}   Let $\hH \ge 1$ and assume that $K$ is a number field such that
\begin{equation}\label{early35}
\hH c_K < \min\big\{H(\alpha) : \text{$\alpha \in O_K$ and $K = \Q(\alpha)$}\big\}.
\end{equation}
Let $F \subseteq K$ be the maximal, totally real subfield of $K$.  Then $K$ is totally complex, $K/F$ is Galois, and every $\mu \in O_K$ 
with $H(\mu) \leq \hH$ is contained in $F$.
\end{theorem}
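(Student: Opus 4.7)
The plan is to prove the three conclusions in turn, each by contradicting \eqref{early35} via a suitable integer generator of $K$ built from Theorems \ref{thmearly1} or \ref{thmearly2}. I would first establish the first and third conclusions, which follow directly. For the first, if $K$ admitted a real embedding, then Theorem \ref{thmearly1} would produce $\alpha \in O_K$ with $K = \Q(\alpha)$ and $H(\alpha) \le c_K \le \hH c_K$ (using $\hH \ge 1$), directly contradicting \eqref{early35}; hence $K$ is totally complex. For the third, observe that any totally real element of $K$ lies in $F$ by maximality. If $\mu \in O_K$ satisfied $H(\mu) \le \hH$ with $\mu \notin F$, then $\mu$ would fail to be totally real, and Theorem \ref{thmearly2} would furnish $\alpha \in O_K$ with $K = \Q(\alpha)$ and $H(\alpha) \le H(\mu)\thinspace c_K \le \hH c_K$, again contradicting \eqref{early35}.

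The main work lies in showing $K/F$ is Galois. I would argue by contradiction: assume $K/F$ is not Galois and attempt to build $\alpha \in O_K$ with $K = \Q(\alpha)$ and $H(\alpha) \le \hH c_K$. A Galois-theoretic reformulation is useful. Let $L$ be the Galois closure of $K$ over $\Q$, and let $\tau \in \mathrm{Gal}(L/\Q)$ denote complex conjugation. Then $F$ is the fixed field of the subgroup of $\mathrm{Gal}(L/\Q)$ generated by $\mathrm{Gal}(L/K)$ together with the conjugates of $\tau$, and $K/F$ is Galois if and only if each conjugate of $\tau$ normalizes $\mathrm{Gal}(L/K)$. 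In particular, non-Galois-ness of $K/F$ is equivalent to the existence of a $\Q$-embedding $\iota: K \to \C$ whose image is not closed under complex conjugation; equivalently, to the existence of an $F$-embedding $\sigma: K \to \overline{\Q}$ with $\sigma(K) \ne K$. The image $K' := \sigma(K)$ is $F$-isomorphic to $K$ and hence has the same degree, discriminant, signature and minimum integer-generator height; in particular $K'$ also satisfies \eqref{early35} with the same $\hH$ and the same constant.

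The hard part will be to convert the non-closure under complex conjugation into a generator of $K$ in $O_K$ of height at most $\hH c_K$. My plan would be to revisit the argument that proves Theorem \ref{thmearly2}, which presumably builds $\alpha$ as a short element of $O_K$ whose $d = [K:\Q]$ conjugates are all distinct, with the non-total-realness of $\mu$ used to guarantee distinctness of conjugates. When $K/F$ fails to be Galois, the additional symmetry provided by the conjugate field $K' \ne K$ should make it possible to ensure distinctness of conjugates even when starting from a totally real element $\mu \in O_F$ (for instance $\mu = 1$, so $H(\mu) = 1$), producing $\alpha \in O_K$ with $K = \Q(\alpha)$ and $H(\alpha) \le c_K \le \hH c_K$ and contradicting \eqref{early35}. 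Adapting the Theorem \ref{thmearly2} construction so that the weaker hypothesis of non-Galois $K/F$ plays the role of non-total-realness of $\mu$ is the main obstacle.
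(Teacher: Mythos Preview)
Your arguments for the first and third conclusions are correct and match the paper's exactly.

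For the Galois conclusion, your reformulation is correct: $K/F$ is Galois if and only if every embedding $\sigma_w:K\to\C$ has image stable under complex conjugation $\rho$. However, you stop at identifying ``the main obstacle'' without resolving it, so the proposal is incomplete as written. In fact the obstacle dissolves once you look inside the proof of Theorem~\ref{thmearly2}. The engine there is the pair Lemma~\ref{lemhaar1}/Lemma~\ref{lemhaar2}: for each archimedean $w$, Minkowski produces $\xi^{(w)}\in O_K\setminus\{0\}$ with $|\xi^{(w)}|_v<1$ for $v\mid\infty$, $v\neq w$, and $H(\xi^{(w)})=|\xi^{(w)}|_w\le c_K$; and Lemma~\ref{lemhaar2} says that if $\Q(\xi^{(w)})\subsetneq K$ then $\rho$ restricts to an automorphism of $\sigma_w(K)$. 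So in your contradiction setup, pick $w$ with $\sigma_w(K)$ not $\rho$-stable; then Lemma~\ref{lemhaar2} forces $\Q(\xi^{(w)})=K$, and $H(\xi^{(w)})\le c_K\le\hH c_K$ contradicts \eqref{early35}. No use of $\mu$ or of the product $\mu\xi^{(w)}$ is needed here; the suggestion to take $\mu=1$ and ``adapt'' the construction is a red herring.

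The paper runs the same machinery in the direct (non-contrapositive) direction. Assuming \eqref{early35}, for \emph{every} $w$ the element $\xi^{(w)}$ fails to generate $K$, so by Lemma~\ref{lemhaar2} each $k^{(w)}=\Q(\xi^{(w)})$ has index $2$ in $K$ with $\sigma_w(k^{(w)})=\sigma_w(K)\cap\R$. An elementary Galois lemma (Lemma~\ref{lemhaar3}) shows $K$ is Galois over $\widetilde{F}=\bigcap_{w\mid\infty}k^{(w)}$, and the description of $\sigma_w(k^{(w)})$ is used twice to prove $\widetilde{F}=F$: the inclusion $\sigma_w(k^{(w)})\subseteq\R$ gives $\widetilde{F}$ totally real, and the equality $\sigma_w(k^{(w)})=\sigma_w(K)\cap\R$ gives that every totally real element of $K$ lies in $\widetilde{F}$. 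This direct route yields the bonus information $\Aut(K/F)=\langle\sigma_w^{-1}\rho\sigma_w:w\mid\infty\rangle$, which your contradiction argument would not immediately produce.
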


For a number field  $K$ that satisfies the hypotheses of Theorem \ref{thmearly3}, a representation for the Galois group $\Aut(K/F)$ is 
provided in (\ref{extra755}).  We also note that the restriction of complex conjugation to $K$ belongs to $\Aut(K/F)$ and is an automorphism
of order $2$.  This implies that  $k = K \cap \R$ is the unique real subfield of $K$ with $[K:k]=2$  and $F \subseteq k \subseteq K$. 
A more general form of these remarks follows from 
(\ref{extra719}) and (\ref{extra726}) in the proof of Theorem \ref{thmearly3}.

Taking $\hH=1$ in Theorem \ref{thmearly3}, we conclude that if $K$ is a number field such that
\begin{equation}\label{early48}
c_K < \min\big\{H(\alpha) : \text{$\alpha \in O_K$ and $K = \Q(\alpha)$}\big\},
\end{equation}
then $K$ is totally complex, $K$ is Galois over its maximal totally real subfield $F$,  and $\Tor\bigl(K^{\times}\bigr) =  \{\pm 1\}$.

In Section \ref{egs} we record some examples 
which motivate our new results. In Section \ref{lems} we prove some auxiliary lemmas. 
The proof of  our main results Theorem \ref{thmearly2} and Theorem \ref{thmearly3} are completed in Sections \ref{Tmain1} and \ref{Tmain2}, respectively.

\section{Examples}\label{egs}

\subsection{Imaginary quadratic fields.} Let $m$ be a squarefree, negative integer and let $L = \Q\bigl(\sqrt{m}\bigr)$ be the imaginary quadratic field generated by $\sqrt{m}$.  
An integral basis for the ring $O_L$ is well known (see \cite[Theorem 7.1.1]{alaca2004}).  We also recall
(see \cite[Theorem 7.1.2]{alaca2004}) that
\begin{equation}\label{early62}
\Delta_L = \begin{cases}   m&    \text{if $m \equiv 1 \pmod 4$},\\
				       4m&    \text{if $m \not\equiv 1 \pmod 4$}.\end{cases}	
\end{equation}
It is now a simple matter to minimize the height over elements of $O_L$ that generate the field $L$ by constructing their minimal polynomials (see \cite[Section 2]{ruppert1998} for more details).   If $m \equiv 1 \pmod 4$ we find that
\begin{equation}\label{early69}
\min\big\{H(\alpha) : \text{$\alpha \in O_L$ and $L = \Q(\alpha)$}\big\} = \h \bigl(1 + |\Delta_L|\bigr)^{\hh},
\end{equation}
and if $m \not\equiv 1 \pmod 4$ then 
\begin{equation}\label{early76}
\min\big\{H(\alpha) : \text{$\alpha \in O_L$ and $L = \Q(\alpha)$}\big\} = \h |\Delta_L|^{\hh}.
\end{equation}

Among the imaginary quadratic fields $L$, only the fields 
\begin{equation*}
L = \Q\bigl(\sqrt{-3})\quad \text{and}\quad L = \Q\bigl(\sqrt{-1}\bigr)
\end{equation*}
satisfy the condition $\Tor\bigl(L^{\times}\bigr) \not= \{\pm 1\}$.  These fields are both generated by a root of unity and therefore the minimal height
of an algebraic integer that generates the field is $1$.  This conclusion also follows from (\ref{early62}), (\ref{early69}), and (\ref{early76}).  

If $L = \Q\bigl(\sqrt{m}\bigr)$ satisfies $\Tor\bigl(L^{\times}\bigr) = \{\pm 1\}$ then it follows from our previous remarks that the minimum height of
an integral generator is greater than $1$.  As the value of the minimal height is given by (\ref{early69}) or by (\ref{early76}), it is easy to verify that
\begin{equation*}
c_L = \biggl(\frac{2}{\pi}\biggr)^{1/2} \bigl|\Delta_L\bigr|^{1/4}< \min\big\{H(\alpha) : \text{$\alpha \in O_L$ and $L = \Q(\alpha)$}\big\}
\end{equation*}  
in both cases.  This shows that for imaginary quadratic fields $L$, the inequality \eqref{early48}
holds if and only if $L$ satisfies
$\Tor\bigl(L^{\times}\bigr) = \{\pm 1\}$. 

\subsection{A quartic number field.} While for quadratic number fields $K$, the inequality \eqref{early48} holds if and only if $K$ is a totally complex  Galois extension of $\Q$  and $\Tor\bigl(K^{\times}\bigr) =  \{\pm 1\}$,  for number fields of degree greater than $2$,  this equivalence no longer holds.
The following is an example of a totally complex quartic number field $K$ that is a Galois  extension of a totally real number field,  with 
$\Tor\bigl(K^{\times}\bigr) = \{\pm 1\}$ for which (\ref{early48}) does not hold.  Let $\alpha$ be an algebraic integer that satisfies
\begin{equation*}
\alpha^2= \sqrt{3} - 2 < 0,
\end{equation*}
and $K=\Q(\alpha)$.  The quartic field $K$ has discriminant (see \cite[Theorem 1]{huardspearmanwilliams1995})
\begin{equation*}\label{early104}
\Delta_K=2^8 3^2.
\end{equation*}
On the other hand,
\begin{equation*}
H(\alpha)=H(\sqrt{3}-2)^{1/2}=(\sqrt{3}+2)^{1/4} < (2/\pi)^{1/2}(2^83^2)^{1/8} = c_K.
\end{equation*}
Assume that $\Tor\bigl(K^{\times}\bigr) \neq \{\pm 1\}$. Since $2$ and $3$ are the only primes that ramify in $K$ we conclude that 
\begin{equation*}
\h \bigl(\sqrt{-3}-1\bigr) \in K\quad \text{or}\quad \sqrt{-1} \in K.
\end{equation*}
But since $\sqrt{3}\in K$,  we get  $\sqrt{-1} \in K$ in either case, and thus also
$\sqrt{2-\sqrt{3}}\in K$. However, the algebraic number $\sqrt{2-\sqrt{3}}$ is totally real of degree $4$, and therefore it cannot belong to the totally 
complex quartic field $K$.  This contradiction confirms that $\Tor\bigl(K^{\times}\bigr) = \{\pm 1\}$.
 
\subsection{ A family of $\CM$-fields.} We recall that $K$ is a $\CM$-field if $K$ has only complex embeddings and there exists a totally real subfield $k \subseteq K$ such that $K/k$ is 
a quadratic extension.  There are well-known characterizations of $\CM$-fields due to Blanksby and Loxton \cite{blanksby1978} and
Shimura \cite[Proposition 5.11 ]{shimura1971}.  It follows that if $K$ is a $\CM$-field that satisfies the hypotheses of Theorem \ref{thmearly3} 
then the maximal totally real subfield $F$ that occurs in the statement of Theorem \ref{thmearly3} is equal to $k$.  

Next we give a family of examples of $\CM$-fields of degree $d = 2N \geq 4$ that satisfy \eqref{early48}.  First we pick  a totally 
real field $F$ of degree $N$. Let $\{\omega_1, \ldots, \omega_N\}$ be a $\Z$-basis of $O_F$.  Let $n$ be a positive, squarefree integer, 
set $M=\Q(\sqrt{-n})$, and let $1, \xi$ be a  $\Z$-basis of $O_M$.  Assume that $\Delta_F$ and $\Delta_M$ are coprime, and set 
$K=FM=F(\sqrt{-n})$.  This implies that 
\begin{equation*}
\{\omega_1, \dots, \omega_N, \omega_1\xi, \dots, \omega_N\xi\}
\end{equation*}
is a  $\Z$-basis of $O_K$, and
\begin{equation*}
|\Delta_K|= \Delta_F^2 |\Delta_M|^N\leq \Delta_F^2(4n)^N.
\end{equation*}
In particular, every $\alpha\in O_K$ with $K=\Q(\alpha)$ can be written in the form 
\begin{equation*}
\alpha=\tfrac{1}{2} \bigl(\omega+\omega'\sqrt{-n}\bigr),
\end{equation*}
where $\omega$ and $\omega' \not= 0$ are in $O_F$. 
Using that $F$ is totally real, and writing $\tau$ for an embedding in $\Hom(K, \C)$, we find
\begin{equation*}
\begin{split}
H(\alpha) &\geq \prod_{\tau \in \Hom(K, \C)} |\tau(\alpha)|^{1/d} = \h \prod_{\tau \in \Hom(K, \C)} \bigl|\tau(\omega)+\tau(\omega')\tau(\sqrt{-n})\bigr|^{1/d}\\
                &\geq \h \sqrt{n}\thinspace \prod_{\tau \in \Hom(K, \C)}\bigl|\tau(\omega')\bigr|^{1/d}\geq \h \sqrt{n}.
\end{split}
\end{equation*}
Hence $K$ satisfies \eqref{early48}
for all sufficiently large $n$.

\section{Lemmas}\label{lems}

Let $K$ be an algebraic number field. We use
\begin{equation*}\label{extra249}
|\ | : \C \rightarrow [0, \infty)\quad\text{and}\quad \rho : \C \rightarrow \C,
\end{equation*}
for the 
standard  absolute value on $\C$ and complex conjugation, respectively.
At each place $v$ of $K$ we write $K_v$ for the completion of $K$ with respect to an absolute value from $v$.  Then $\|\ \|_v$ is the unique absolute 
value from $v$ that extends either the usual Euclidean absolute value on $\Q_{\infty}$ or the unique $p$-adic absolute value on $\Q_p$.  We write
$d = [K : \Q]$ for the degree of $K$ and $d_v = [K_v : \Q_v]$ for the local degree at the place $v$.  Then we define a second absolute value from $v$ by
\begin{equation}\label{extra258}
|\ |_v = \|\ \|_v^{d_v/d}.
\end{equation}
It follows that the absolute, multiplicative Weil height of $\alpha$ in $K^{\times}$ is the map (\ref{early7}) defined by 
(see \cite[section 1.5.7]{bombieri2006})
\begin{equation}\label{extra261}
H(\alpha) = \prod_v \max\big\{1, |\alpha|_v\big\}.
\end{equation}

We also write $W_{\infty}(K/\Q)$ for the set of archimedean places of $K$.  If $v$ is a real place of $K$ then $\sigma_v : K \rightarrow \C$ 
is the embedding associated to $v$ by
\begin{equation}\label{extra265}
\|\beta\|_v = \bigl|\sigma_v(\beta)\bigr|\quad\text{for each $\beta$ in $K$}.
\end{equation}
If $w$ is a complex place of $K$ then
\begin{equation*}
\sigma_w : K \rightarrow \C\quad\text{and}\quad \rho \sigma_w : K \rightarrow \C
\end{equation*}
are the two distinct embeddings associated to $w$ by the identity
\begin{equation}\label{extra274}
\|\beta\|_w = \bigl|\sigma_w(\beta)\bigr| = \bigl|\rho \sigma_w(\beta)\bigr| \quad\text{for each $\beta$ in $K$}.
\end{equation}
We write more simply $\Hom(K, \C)$ for the collection of all $d$ distinct embeddings of $K$ into $\C$.
If $\alpha \not= 0$ belongs to the ring $O_K$ of algebraic integers in $K$ then from (\ref{extra258}), (\ref{extra265}), and (\ref{extra274}), 
we obtain the identity
\begin{equation*}
H(\alpha)^d = \prod_{v | \infty} \max\big\{1, \|\alpha\|_v^{d_v}\big\} = \prod_{\tau \in \Hom(K, \C)} \max\big\{1, |\tau \alpha|\big\}.
\end{equation*}

The following result is a version of Minkowski's basic theorem on lattice points in convex bodies.  A proof is given in \cite[Chapter I, Theorem 5.3]{neukirch1999}.

\begin{theorem}\label{thmhaar3}  For each embedding $\tau$ in $\Hom(K, \C)$ let
$b(\tau)$ be a positive real number.  Assume that $b(\rho \tau) = b(\tau)$ for each $\tau$ in $\Hom(K, \C)$, and
\begin{equation}\label{extra310}
(c_K)^d < \prod_{\tau \in \Hom(K, \C)} b(\tau),
\end{equation} 
where $c_K$ is defined in {\rm (\ref{early3})}.  Then there exists $\xi \not= 0$ in $O_K$ such that
\begin{equation*}
|\tau \xi| < b(\tau)\quad\text{for all $\tau$ in $\Hom(K, \C)$}.
\end{equation*}
\end{theorem}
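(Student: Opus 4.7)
The plan is to derive this from the classical Minkowski convex body theorem applied to the standard archimedean embedding of $K$.  Let $r$ and $s$ be the numbers of real and complex places of $K$, so $d = r+2s$, and fix for each archimedean place $v$ one of its associated embeddings (the unique real one when $v$ is real, either member of the conjugate pair when $v$ is complex).  The resulting Minkowski map
\[
\Phi : K \to \R^{r} \times \C^{s},\quad \beta \mapsto \bigl(\sigma_{v_1}(\beta),\dots,\sigma_{v_r}(\beta),\sigma_{w_1}(\beta),\dots,\sigma_{w_s}(\beta)\bigr),
\]
carries $O_K$ to a full lattice in $\R^{d}$ (after identifying $\C$ with $\R^{2}$), and a standard discriminant calculation gives the covolume of $\Phi(\fA)$ as $2^{-s}\thinspace|\Delta_K|^{1/2}\thinspace[O_K:\fA]$.

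Next I would package the bounds $b(\tau)$ into a convex, symmetric body by setting
\[
B = \bigl\{(x_1,\dots,x_r,z_1,\dots,z_s) \in \R^{r}\times\C^{s} : |x_i| < b(\sigma_{v_i}),\ |z_j| < b(\sigma_{w_j})\bigr\}.
\]
The hypothesis $b(\rho\tau) = b(\tau)$ is precisely what makes this definition independent of which of $\sigma_{w_j}$ or $\rho\sigma_{w_j}$ was picked at each complex place, and it ensures that any $\xi \in \fA$ with $\Phi(\xi) \in B$ automatically satisfies $|\tau\xi| < b(\tau)$ for every one of the $d$ embeddings in $\Hom(K,\C)$, not merely for the chosen representatives.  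The volume of $B$ factors as a product of intervals and discs:
\[
\Vol(B) = \prod_{i=1}^{r} 2\thinspace b(\sigma_{v_i})\cdot \prod_{j=1}^{s} \pi\thinspace b(\sigma_{w_j})^{2} = 2^{r}\pi^{s}\prod_{\tau \in \Hom(K,\C)} b(\tau),
\]
since each complex place contributes both $\sigma_{w_j}$ and $\rho\sigma_{w_j}$ to the global product over $\Hom(K,\C)$.

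Finally I would compare $\Vol(B)$ with $2^{d}$ times the covolume of $\Phi(\fA)$.  Using $c_K^{d} = (2/\pi)^{s}|\Delta_K|^{1/2}$ read off from (\ref{early3}),
\[
2^{d}\cdot 2^{-s}\thinspace |\Delta_K|^{1/2}\thinspace[O_K:\fA] = 2^{r+s}|\Delta_K|^{1/2}[O_K:\fA] = 2^{r}\pi^{s}\thinspace c_K^{d}\thinspace [O_K:\fA],
\]
so the hypothesis (\ref{extra310}) rearranges precisely to $\Vol(B) > 2^{d}\thinspace\mathrm{covol}\bigl(\Phi(\fA)\bigr)$.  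The open form of Minkowski's convex body theorem then delivers a nonzero $\xi \in \fA$ with $\Phi(\xi) \in B$, and the defining inequalities of $B$ translate to $|\tau\xi| < b(\tau)$ for all $\tau \in \Hom(K,\C)$.  The only genuine work lies in this bookkeeping: aligning the factor $(2/\pi)^{s}$ hidden in $c_K$ with the $\pi^{s}$ produced by the areas of complex discs and the $2^{-s}$ in the Minkowski covolume.  The symmetry hypothesis $b(\rho\tau) = b(\tau)$ is tailor-made to make the real and complex factorizations compatible, so I do not anticipate any serious obstacle beyond tracking these constants carefully.
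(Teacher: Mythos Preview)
Your argument is correct and is precisely the classical Minkowski-embedding proof; the paper itself does not prove this theorem but simply cites \cite[Theorem 5.3]{neukirch1999}, whose proof is the one you have written out. The bookkeeping with $2^{-s}|\Delta_K|^{1/2}[O_K:\fA]$, the volume $2^r\pi^s\prod_\tau b(\tau)$, and the identity $c_K^d=(2/\pi)^s|\Delta_K|^{1/2}$ is all correct, so there is nothing to add.
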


The next lemma is a simple application of Minkowski's theorem.

\begin{lemma}\label{lemhaar1}  Assume that the field $K$ has at least two archimedean places.  Let $w$ be an archimedean place of $K$.  For each archimedean place $v\neq w$ let 
$0<B_v\leq 1$.
Then there exists a nonzero algebraic integer $\xi^{(w)}$ in $O_K$ such that
\begin{equation}\label{extra338}
\bigl|\xi^{(w)}\bigr|_v < B_v\quad\text{if $v | \infty$ and $v \not= w$,}
\end{equation}
and
\begin{equation}\label{extra345}
 \bigl|\xi^{(w)}\bigr|_w  \le  c_K \prod_{\substack{v\mid \infty \\ v\neq w}}B_v^{-1}.
\end{equation}
Furthermore, we have $H\bigl(\xi^{(w)}\bigr)=\bigl|\xi^{(w)}\bigr|_w$.
\end{lemma}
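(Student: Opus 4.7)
The plan is to apply Theorem \ref{thmhaar3} to the ideal $\fA = O_K$, with a carefully chosen family of bounds $b(\tau)$ indexed by $\tau \in \Hom(K, \C)$. For any archimedean place $v$ and any embedding $\tau$ associated to $v$, the normalization (\ref{extra258}) together with (\ref{extra265}) and (\ref{extra274}) yields $|\xi|_v = |\tau\xi|^{d_v/d}$. Hence the desired strict inequality $|\xi|_v < B_v$ for $v \neq w$ is equivalent to $|\tau\xi| < B_v^{d/d_v}$, which suggests setting $b(\tau) = B_v^{d/d_v}$ for every $\tau$ associated to a place $v \neq w$. Since $\tau$ and $\rho\tau$ lie over the same place, the symmetry condition $b(\rho\tau) = b(\tau)$ is automatic.

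For each embedding $\tau_w$ associated to $w$, I would set
\begin{equation*}
b(\tau_w) = r \cdot c_K^{d/d_w} \prod_{\substack{v \mid \infty \\ v \neq w}} B_v^{-d/d_w},
\end{equation*}
where $r > 1$ is a parameter to be sent to $1$. A direct calculation yields
\begin{equation*}
\prod_{\tau \in \Hom(K, \C)} b(\tau) = b(\tau_w)^{d_w} \prod_{\substack{v \mid \infty \\ v \neq w}} b(\tau_v)^{d_v} = r^{d_w} (c_K)^d,
\end{equation*}
so the hypothesis (\ref{extra310}) is satisfied for every $r > 1$. Theorem \ref{thmhaar3} then produces a nonzero $\xi_r \in O_K$ satisfying $|\xi_r|_v < B_v$ for each archimedean $v \neq w$ and $|\xi_r|_w < r^{d_w/d} \cdot c_K \prod_{v \neq w} B_v^{-1}$.

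To pass from this open bound at $w$ to the closed one in (\ref{extra345}), I observe that as $r$ ranges over $(1, 2]$, the elements $\xi_r$ all lie in a fixed bounded region of $\prod_{v \mid \infty} K_v$; since $O_K$ is a discrete lattice in this space, only finitely many distinct values occur. Extracting a subsequence $r_n \to 1^+$ on which $\xi_{r_n}$ is constant produces a single element $\xi^{(w)} \in O_K \setminus \{0\}$ satisfying both (\ref{extra338}) and (\ref{extra345}).

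Finally, to establish $H(\xi^{(w)}) = |\xi^{(w)}|_w$, note that as a nonzero algebraic integer $\xi^{(w)}$ satisfies $|\xi^{(w)}|_v \leq 1$ at every non-archimedean place, while (\ref{extra338}) combined with $B_v \leq 1$ gives $|\xi^{(w)}|_v < 1$ at each archimedean $v \neq w$. The product formula then forces $|\xi^{(w)}|_w \geq 1$, so the only factor greater than $1$ in the product (\ref{extra261}) is the one at $w$. The argument is conceptually routine; the main technical care lies in bookkeeping the $d_v/d$ exponents when translating the ``$|\cdot|_v$'' bounds into the ``$|\tau\xi|$'' bounds required by Minkowski, and in the short limiting step used to upgrade the strict inequality at $w$ to the non-strict one claimed in (\ref{extra345}).
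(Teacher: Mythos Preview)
Your proof is correct and follows essentially the same approach as the paper: apply Theorem~\ref{thmhaar3} with $b(\tau)=B_v^{d/d_v}$ for $\tau$ over $v\neq w$ and an $\epsilon$-inflated bound at $w$, then use a finiteness argument to pass from the strict inequality at $w$ to the non-strict one. The only cosmetic differences are that the paper writes the slack as $(1+\epsilon)$ rather than $r$, and invokes Northcott's theorem rather than the discreteness of $O_K$ in Minkowski space for the limiting step; these are equivalent here.
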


\begin{proof}  Let $\ep > 0$.  If $v \not= w$ is a real place of $K$ and 
\begin{equation*}
\sigma_v : K \rightarrow \C 
\end{equation*}
is the associated embedding, we set
\begin{equation*}
b\bigl(\sigma_v\bigr) = B_v^d.
\end{equation*}  
Similarly, if $v \not= w$ is a complex place of $K$ and the associated embeddings are
\begin{equation*}
\sigma_v : K \rightarrow \C\quad\text{and}\quad \rho \sigma_v : K \rightarrow \C,
\end{equation*}
we set
\begin{equation*}
b\bigl(\sigma_v\bigr) = b\bigl(\rho \sigma_v\bigr) = B_v^{d/2}.
\end{equation*}
If $w$ is a real place we define
\begin{equation*}
b\bigl(\sigma_w\bigr) = \big((1 + \ep)c_K \prod_{\substack{v\mid \infty \\ v\neq w}} B_v^{-1}\big)^d,
\end{equation*}
and if $w$ is a complex place, we define
\begin{equation*}
b\bigl(\sigma_w\bigr) = b\bigl(\rho \sigma_w\bigr) = \big((1 + \ep) c_K \prod_{\substack{v\mid \infty \\ v\neq w}} B_v^{-1} \big)^{d/2}.
\end{equation*}
Then it follows in all cases that
\begin{equation}\label{extra390}
\prod_{\tau \in \Hom(K, \C)} b(\tau) = \left((1 + \ep)c_K\right)^d.
\end{equation}

The identity (\ref{extra390}) implies that the positive real numbers $b(\tau)$ satisfy the hypothesis (\ref{extra310}) in the statement of 
Theorem \ref{thmhaar3}.  Then it follows from the conclusion of Theorem \ref{thmhaar3} that there exists an algebraic integer $\xi^{(w)} \not= 0$ 
in $O_K$ such that 
\begin{equation*}
\bigl|\tau \xi^{(w)}\bigr| <  b(\tau)\quad\text{for all $\tau$ in $\Hom(K, \C)$}.
\end{equation*}
We have shown that for every positive value of $\ep$ there exists a nonzero algebraic integer $\xi^{(w)}$ in $O_K$ such that
\begin{equation}\label{extra452}
\bigl|\xi^{(w)}\bigr|_v < B_v\quad\text{if $v | \infty$ and $v \not= w$,}
\end{equation}
and
\begin{equation}\label{extra459}
\bigl|\xi^{(w)}\bigr|_w  < (1 + \ep)c_K \prod_{\substack{v\mid \infty \\ v\neq w}} B_v^{-1}.
\end{equation}
Using that $B_v\leq 1$ whenever $v\mid \infty$ and $v\neq w$, and that $\xi^{(w)}$ is a nonzero algebraic integer, we conclude from the product formula that $\bigl|\xi^{(w)}\bigr|_w>1$. Thus 
(\ref{extra261}) gives
\begin{equation*}
H\bigl(\xi^{(w)}\bigr)=\bigl|\xi^{(w)}\bigr|_w.
\end{equation*}
It follows from Northcott's theorem in \cite{northcott1949} that the set of nonzero algebraic integers $\xi^{(w)}$ in $O_K$ that satisfy (\ref{extra452}) and
(\ref{extra459}) with $\ep = 1$ is finite.  And we have shown that this finite set is not empty for every positive value of $\ep$.  Therefore the 
statement of the lemma follows.
\end{proof}

\begin{lemma}\label{lemhaar2}  Assume that the field $K$ is totally complex and has at least two archimedean places.
Let $w$ be an archimedean place of $K$ and let $\xi^{(w)}$ be a nonzero element of $O_K$ that satisfies 
\begin{equation}\label{extra498}
|\xi^{(w)}|_v < 1 \quad\text{if $v | \infty$ and $v \not= w$.}
\end{equation}
Write
\begin{equation*}
\sigma_w : K \rightarrow \C\quad \text{and}\quad \rho \sigma_w : K \rightarrow \C
\end{equation*}
for the embeddings of $K$ into $\C$ that satisfy the identity {\rm (\ref{extra274})}.  Assume that the field
\begin{equation*}
k = \Q(\xi^{(w)})
\end{equation*}
is a proper subfield of $K$.   Then we have 
\begin{equation}\label{extra510}
\bigl[K : k] = 2.  
\end{equation}
Moreover, the subfield $k$ satisfies 
\begin{equation}\label{extra516}
\sigma_w(k) = \sigma_w(K) \cap \R.
\end{equation}
And the restriction of complex conjugation
\begin{equation}\label{extra523}
\rho : \sigma_w(K) \rightarrow \sigma_w(K)
\end{equation}
is an automorphism that fixes the subfield $\sigma_w(k)$.
\end{lemma}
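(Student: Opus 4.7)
The plan is to exploit the distinguished size of $\xi^{(w)}$ at $w$ to force $w$ to be the unique archimedean place of $K$ lying above its restriction to $k$, and then to read off the three conclusions from elementary facts about the completions $K_w$ and $k_u$.

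First I would show that $w$ is the only archimedean place of $K$ above $u := w|_k$. Since $\xi^{(w)}$ is a nonzero algebraic integer, one has $|\xi^{(w)}|_v \le 1$ at every finite $v$; combined with (\ref{extra498}) and the hypothesis that $K$ has at least two archimedean places, the product formula forces $\|\xi^{(w)}\|_w > 1$. But $\xi^{(w)}$ lies in $k$, so for every place $v$ of $K$ with $v \mid u$ the absolute value $\|\cdot\|_v$ agrees with $\|\cdot\|_u$ on $k$; hence $\|\xi^{(w)}\|_v = \|\xi^{(w)}\|_u = \|\xi^{(w)}\|_w > 1$, and therefore $|\xi^{(w)}|_v > 1$. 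An archimedean $v \neq w$ above $u$ would then violate (\ref{extra498}).

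Next, from the standard identity $\sum_{v \mid u}[K_v : k_u] = [K:k]$ the previous step gives $[K_w : k_u] = [K:k]$. Since $K$ is totally complex, $K_w = \C$, while $k_u \in \{\R, \C\}$; the hypothesis $k \subsetneq K$ forces $[K:k] \ge 2$, ruling out $k_u = \C$. Thus $k_u = \R$ and $[K:k] = 2$, establishing (\ref{extra510}); in particular $u$ is a real place of $k$, so $\sigma_w(k) \subseteq \R$. Because $[\sigma_w(K):\sigma_w(k)] = 2$, the intermediate field $\sigma_w(K) \cap \R$ is either $\sigma_w(k)$ or all of $\sigma_w(K)$, and the latter would give $K$ a real embedding, contradicting total complexity; this completes (\ref{extra516}). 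Finally, writing $\sigma_w(K) = \sigma_w(k)(\gamma)$ with $\gamma \notin \R$ and $\gamma^2 \in \sigma_w(k) \subseteq \R$, one finds $\gamma^2 < 0$ and so $\rho(\gamma) = -\gamma \in \sigma_w(K)$; hence $\sigma_w(K)$ is stable under $\rho$, the restriction is an automorphism, and it fixes $\sigma_w(k)$ pointwise because $\sigma_w(k) \subseteq \R$.

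The hard part will be the first step: I need to be careful with the normalization conventions so that $\|\xi^{(w)}\|_v$ really does equal $\|\xi^{(w)}\|_u$ for every $v \mid u$, and that the strict inequality in (\ref{extra498}) then yields a genuine contradiction for any archimedean $v \neq w$ above $u$. Once that uniqueness is in hand, the rest is a formal manipulation of local degrees together with a single quadratic extension argument over $\sigma_w(k)$.
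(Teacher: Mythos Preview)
Your proposal is correct and follows essentially the same route as the paper: use the product formula to get $\|\xi^{(w)}\|_w>1$, note that any other archimedean $v\mid u$ would then also satisfy $\|\xi^{(w)}\|_v=\|\xi^{(w)}\|_u>1$ and hence $|\xi^{(w)}|_v>1$, contradicting (\ref{extra498}); then read off $[K:k]=[K_w:k_u]=2$ and $k_u\cong\R$, and finish with the intermediate-field dichotomy for $\sigma_w(K)\cap\R$. The only cosmetic difference is that you spell out explicitly why $\rho$ stabilizes $\sigma_w(K)$ (via $\sigma_w(K)=\sigma_w(k)(\gamma)$ with $\gamma^2<0$, so $\rho(\gamma)=-\gamma$), whereas the paper simply asserts (\ref{extra523}) once $\sigma_w(k)\subseteq\R$ and $[\sigma_w(K):\sigma_w(k)]=2$ are known.
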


\begin{proof}  
Since $\xi^{(w)}$ is a nonzero algebraic integer it follows from (\ref{extra498}) and the product formula
that 
\begin{equation*}
1 < |\xi^{(w)}|_w.
\end{equation*}
Let $x$ be the unique archimedean place of the proper subfield $k$ that 
satisfies $w | x$.  Write $K_w$ for the completion of $K$ at the place $w$ and $k_x$ for the completion of $k$ at the place $x$.  
Then we get
\begin{equation*}
1 < |\xi^{(w)}|_w = |\xi^{(w)}|_x.
\end{equation*}
If $v \not= w$ is a second archimedean place of $K$ that also satisfies $v | x$, we find that
\begin{equation*}
1 < |\xi^{(w)}|_x=|\xi^{(w)}|_v
\end{equation*}
which contradicts (\ref{extra498}).  We conclude that $w$ is the {\it unique} place of $K$ that satisfies $w | x$.  Because 
the global degree of the extension $K/k$ is the sum of local degrees over the completion $k_x$, we conclude that
\begin{equation}\label{extra558}
2 \le [K : k] = \bigl[K_w : k_x\bigr] \le [\C : \R] = 2.
\end{equation}
This verifies (\ref{extra510}).  

It also follows from the equality in (\ref{extra558}) that the completion $k_x$  is isomorphic 
to $\R$.  We conclude that
\begin{equation}\label{extra565}
\sigma_w\bigl(k\bigr) \subseteq \R,\quad\text{and}\quad \sigma_w\bigl(k\bigr) \subseteq \sigma_w(K) \cap \R \subseteq \sigma_w(K).
\end{equation}
Because $K/k$ is a quadratic extension it follows from the two containments on the right of (\ref{extra565}) that either
\begin{equation}\label{extra572}
\sigma_w\bigl(k\bigr) = \sigma_w(K) \cap \R\quad\text{or}\quad \sigma_w(K) \cap \R = \sigma_w(K).
\end{equation}
As $K$ is totally complex the equality on the right of (\ref{extra572}) is clearly impossible, and we conclude that the equality on the left of 
(\ref{extra572}) must hold.  This verifies the equality in (\ref{extra516}).  It also shows that the restriction of complex conjugation to $\sigma_w(K)$
fixes the real subfield $\sigma_w\bigl(k\bigr)$.  The field extension $\sigma_w(K) / \sigma_w\bigl(k\bigr)$ is quadratic and therefore Galois. We conclude that $\rho$ in 
(\ref{extra523})  is indeed an automorphism.
\end{proof}

In the statement of Lemma \ref{lemhaar2} the map $\rho : \C \rightarrow \C$ is restricted in (\ref{extra523}) to the various embeddings of $K$ 
into $\C$.   We record a variant in which $K$ is fixed and the group $\Aut\bigl(K/k\bigr)$ is identified.

\begin{corollary}\label{corhaar1}  Let $K$ be an algebraic number field that satisfies the hypotheses of 
{\rm Lemma \ref{lemhaar2}}. 
 For
each archimedean place $w$ of $K$ let $\xi^{(w)}$ be a nonzero element of $O_K$
that satisfies {\rm (\ref{extra498})} from {\rm Lemma \ref{lemhaar2}}, and suppose
\begin{equation*}
k^{(w)} = \Q\bigl(\xi^{(w)}\bigr)
\end{equation*}
is a proper subfield of $K$.
Then $K/k^{(w)}$ is a Galois extension of order $2$ and
\begin{equation*}
\Aut\bigl(K/k^{(w)}\bigr) = \langle \sigma_w^{-1} \rho \sigma_w\rangle.
\end{equation*}
\end{corollary}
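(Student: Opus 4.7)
The plan is to read the corollary almost directly off of Lemma \ref{lemhaar2}. That lemma already delivers $[K:k^{(w)}]=2$, and in characteristic zero every quadratic extension is automatically normal and separable, hence Galois. So the only substantive task is to exhibit a nontrivial element of $\Aut\bigl(K/k^{(w)}\bigr)$ and verify that it equals $\sigma_w^{-1}\rho\sigma_w$; the claim $\Aut\bigl(K/k^{(w)}\bigr)=\langle \sigma_w^{-1}\rho\sigma_w\rangle$ will then follow from the fact that $\Aut\bigl(K/k^{(w)}\bigr)$ has exactly two elements.

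The candidate map is built from two ingredients already supplied by Lemma \ref{lemhaar2}: first, the isomorphism $\sigma_w:K\to\sigma_w(K)$ onto its image; second, the fact that $\rho$ restricts to an automorphism of $\sigma_w(K)$, as stated in (\ref{extra523}). Composing, $\sigma_w^{-1}\rho\sigma_w$ is a well-defined ring automorphism of $K$. I would then verify the two required properties in turn. To see that it fixes $k^{(w)}$: for $\beta\in k^{(w)}$ we have $\sigma_w(\beta)\in\sigma_w(k^{(w)})$, and Lemma \ref{lemhaar2} states that $\rho$ fixes $\sigma_w(k^{(w)})$ pointwise, so $\rho\sigma_w(\beta)=\sigma_w(\beta)$ and hence $\sigma_w^{-1}\rho\sigma_w(\beta)=\beta$. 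To see that it is nontrivial: since $K$ is totally complex, $\sigma_w(K)\not\subseteq\R$, so there exists $\gamma\in\sigma_w(K)$ with $\rho(\gamma)\neq\gamma$, which means $\rho$ is not the identity on $\sigma_w(K)$ and therefore $\sigma_w^{-1}\rho\sigma_w$ is not the identity on $K$.

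Putting this together, $\sigma_w^{-1}\rho\sigma_w$ is a nontrivial element of $\Aut\bigl(K/k^{(w)}\bigr)$, a group of order at most $[K:k^{(w)}]=2$. So $\bigl|\Aut\bigl(K/k^{(w)}\bigr)\bigr|=2$, the extension $K/k^{(w)}$ is Galois, and the cyclic group of order $2$ generated by $\sigma_w^{-1}\rho\sigma_w$ exhausts it.

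There is no real obstacle here: all the content sits in Lemma \ref{lemhaar2}. The only detail worth a sentence of care is checking that $\sigma_w^{-1}\rho\sigma_w$ actually lands in $\Aut(K)$ rather than merely in $\Hom(K,\C)$, which is exactly the statement that $\rho$ preserves $\sigma_w(K)$ setwise; this is precisely (\ref{extra523}) in the preceding lemma.
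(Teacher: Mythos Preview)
Your proposal is correct and follows essentially the same approach as the paper's proof: invoke Lemma \ref{lemhaar2} to get $[K:k^{(w)}]=2$, observe that (\ref{extra523}) makes $\sigma_w^{-1}\rho\sigma_w$ a well-defined automorphism of $K$ fixing $k^{(w)}$, and conclude. Your write-up is in fact slightly more careful than the paper's, which asserts nontriviality of $\sigma_w^{-1}\rho\sigma_w$ without spelling out (as you do) that it follows from $K$ being totally complex.
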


\begin{proof}  
It follows from Lemma \ref{lemhaar2} that $K/k^{(w)}$ is an extension of degree $2$.
And it follows from (\ref{extra523}) that
\begin{equation}\label{extra617}
\sigma_w^{-1} \rho \sigma_w : K \rightarrow K
\end{equation}
is an automorphism that fixes the subfield $k^{(w)} \subseteq K$. We conclude that
$K/k^{(w)}$ is a Galois extension and 
$\Aut\bigl(K/k^{(w)}\bigr)$ is generated by the nontrivial automorphism (\ref{extra617}).
\end{proof}

Next we prove an elementary lemma from Galois theory.

\begin{lemma}\label{lemhaar3}  Let $L$ be an algebraic number field and let
\begin{equation*}
\big\{\ell_1, \ell_2, \cdots , \ell_J\big\}
\end{equation*}
be a finite collection of subfields of $L$.  Assume that $L/\ell_j$ is a Galois extension for each $j = 1, 2, \dots , J$.  If
\begin{equation*}
F = \ell_1 \cap \ell_2 \cap \cdots \cap \ell_J
\end{equation*}
then $L/F$ is a Galois extension.  Moreover, let
\begin{equation*}
H_j = \Aut(L/\ell_j) \subseteq \Aut(L/F)\quad\text{for $j = 1, 2, \dots , J$,}
\end{equation*}  
be the subgroups attached to the extensions $L/\ell_j$.  And let
\begin{equation*}
G = \langle H_1, H_2, \cdots , H_J\rangle \subseteq \Aut(L/F)
\end{equation*}
be the group generated by the collection of subgroups $H_1, H_2, \dots , H_J$.  Then we have
\begin{equation}\label{extra218}
G = \Aut(L/F).
\end{equation}
\end{lemma}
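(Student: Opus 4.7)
The plan is to first prove $L/F$ is Galois by a fixed-field argument carried out in the Galois closure of $L/F$, and then to recover $G = \Aut(L/F)$ by the same kind of fixed-field argument inside $L/F$ itself.

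First I would fix an algebraic closure $\oQ$ containing $L$ and let $M \subseteq \oQ$ denote the Galois closure of $L/F$. Then $M/F$ is Galois, and since $F \subseteq \ell_j$ we also have $M/\ell_j$ Galois for each $j$. Set $\widetilde H_j = \Aut(M/\ell_j)$. By the Galois correspondence applied to $M/F$, the fixed field of $\widetilde H_j$ is exactly $\ell_j$, so the fixed field of $\langle \widetilde H_1, \ldots, \widetilde H_J\rangle$ equals $\bigcap_{j} \ell_j = F$, and therefore $\langle \widetilde H_1, \ldots, \widetilde H_J\rangle = \Aut(M/F)$. The assumption that each $L/\ell_j$ is Galois is precisely the statement that $L \subseteq M$ is setwise stable under every element of $\widetilde H_j$ (the minimal polynomial of any $\alpha\in L$ over $\ell_j$ splits in $L$). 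Consequently $L$ is stable under the entire group $\Aut(M/F)$.

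From this I would deduce that $L/F$ itself is Galois. Since $M/F$ is Galois and $L \subseteq M$, every $F$-embedding of $L$ into $\oQ$ is the restriction of some element of $\Aut(M/F)$; stability of $L$ under $\Aut(M/F)$ then forces every such embedding to send $L$ into $L$. Hence $L/F$ is normal, and since we are in characteristic zero it is Galois; in particular $M = L$.

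With $L/F$ now known to be Galois, the remaining assertion is a direct fixed-field computation inside $\Aut(L/F)$. For each $j$, the Galois correspondence applied to $L/\ell_j$ gives that the fixed field of $H_j = \Aut(L/\ell_j)$ equals $\ell_j$, so the fixed field of $G = \langle H_1, \ldots, H_J\rangle$ equals $\bigcap_{j} \ell_j = F$. Since $L/F$ is Galois, the only subgroup of $\Aut(L/F)$ whose fixed field is $F$ is $\Aut(L/F)$ itself, and therefore $G = \Aut(L/F)$, which is the asserted equality (\ref{extra218}). The main conceptual step is the passage from ``$L$ is setwise stable under $\Aut(M/F)$'' to ``$L/F$ is normal''; it is the only place where the Galois closure enters, and it is what lets the remainder reduce to the standard Galois correspondence.
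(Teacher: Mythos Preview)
Your argument is correct, but it takes a different route from the paper's.

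The paper proceeds directly and constructively: it picks a primitive element $\alpha$ with $L = F(\alpha)$, forms the polynomial
\[
P(x) = \prod_{\gamma \in G} \bigl(x - \gamma(\alpha)\bigr),
\]
checks that $P$ is separable (distinct automorphisms are distinguished by their value at $\alpha$), and observes that $P$ is fixed by every element of $G$. Since $G$ contains each $H_j$, the coefficients of $P$ lie in every $\ell_j$, hence in $F$. Thus $P \in F[x]$ is separable and splits in $L$, so $L/F$ is Galois; comparing degrees, $[L:F] \le \deg P = |G| \le |\Aut(L/F)| = [L:F]$, which forces $G = \Aut(L/F)$.

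By contrast, you pass to the Galois closure $M$ of $L/F$, use the Galois correspondence there to see that $\langle \widetilde H_1,\ldots,\widetilde H_J\rangle = \Aut(M/F)$, observe that each $\widetilde H_j$ stabilizes $L$ because $L/\ell_j$ is Galois, and conclude that $L$ is stable under all of $\Aut(M/F)$, hence normal over $F$. You then rerun the fixed-field computation inside $L/F$ to get $G = \Aut(L/F)$. This is the ``clean Galois-theoretic'' proof; it leans on the correspondence twice and on the auxiliary object $M$, whereas the paper's argument never leaves $L$ and exhibits an explicit polynomial witnessing both normality and the degree bound simultaneously. Your approach generalizes more transparently (it is really a statement about closed subgroups and fixed fields), while the paper's is more self-contained and avoids invoking the existence of the Galois closure. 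A small efficiency note: once you have shown $M = L$, your final paragraph is literally the specialization of your first paragraph with $M$ replaced by $L$, so you could collapse the two passes into one.
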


\begin{proof}  Since the characteristic of $F$ is zero the extension $L/F$ is separable.
Suppose $\alpha$ is an  element of $L$ satisfying $L = F(\alpha)$.  Then let $P(x)$ be the monic polynomial  
\begin{equation*}
P(x) = \prod_{\gamma \in G} \bigl(x - \gamma(\alpha)\bigr).
\end{equation*}
It follows that $P(\alpha) = 0$. Since $G \subseteq \Aut(L/F)$ each root $\gamma(\alpha)$ belongs to $L$.  

Let $\vphi$ be an automorphism in $G$.  Then we have
\begin{equation}\label{extra239}
\begin{split}
\vphi\bigl(P(x)\bigr) &= \prod_{\gamma \in G} \bigl(x - \vphi\bigl(\gamma(\alpha)\bigr)\bigr)\\ 
			      &= \prod_{\gamma \in G} \bigl(x - \gamma(\alpha)\bigr) = P(x).
\end{split}
\end{equation}
From (\ref{extra239}) we conclude that each coefficient of $P(x)$ is fixed by the automorphisms in each of the groups $H_j$ for 
$j = 1, 2, \dots , J$.  Therefore each coefficient belongs to the field
\begin{equation*}\label{extra245}
\ell_1 \cap \ell_2 \cap \cdots \cap \ell_J = F.
\end{equation*}
We conclude that  $P(x)$ is in $F[x]$ and $L$ is the splitting field of $P$ over $F$.  
It follows that $L/F$ is
a Galois extension and $\#G=\deg P\geq [L:F]=\#\Aut(L/F)$. This implies that  $G=\Aut(L/F)$.
\end{proof}

\section{Proof of Theorem \ref{thmearly2}}\label{Tmain1}

If $K$ has an embedding into $\R$ then the inequality (\ref{early22}) follows from (\ref{early16}) in the statement of Theorem \ref{thmearly1}.  
Therefore we assume throughout the remainder of the proof that $K$ is totally complex.  

If $K$ is a complex quadratic field then we have $K = \Q(\mu)$ because $\mu$ is not totally real.  Since $c_K\geq 1$ it follows that (\ref{early22}) holds with
$\alpha = \mu$. 

For the remainder of the proof we assume that  the totally complex number field $K$ has degree at least
$4$, and so has at least two archimedean places.
 Let $w$ be an archimedean 
place of $K$ such that $\sigma_w(\mu)\notin \R$.  

We apply Lemma \ref{lemhaar1} with the choices $B_v=\max\{1,|\mu|_v\}^{-1}$ (for $v|\infty, v\neq w$)
to get a nonzero algebraic integer  $\xi^{(w)}$ in $O_K$ that satisfies the inequalities (\ref{extra338}) and (\ref{extra345}) in Lemma \ref{lemhaar1}.  
Consider the subfield
\begin{equation}\label{temp30}
\Q\bigl(\xi^{(w)}\bigr) \subseteq K.
\end{equation}
 Using (\ref{extra345}) we find that
\begin{equation*}
H\bigl(\xi^{(w)}\bigr) = \bigl |\xi^{(w)}\bigr |_w \le  c_K \prod_{\substack{v\mid \infty \\ v\neq w}}\max\{1,|\mu|_v\}\leq c_K H(\mu).
\end{equation*}
If there is equality in the containment (\ref{temp30}) then 
(\ref{early22}) holds with
$\alpha = \xi^{(w)}$.

Now suppose $\Q\bigl(\xi^{(w)}\bigr)$ is a proper subfield of $K$.
It follows from (\ref{extra516}) that $\sigma_w(\xi^{(w)})\in \R$.
Consider the nonzero algebraic integer $\alpha=\mu \xi^{(w)}$, and set
\begin{equation*}
k = \Q(\alpha)  \subseteq K.
\end{equation*}
We note that 
\begin{equation*}
|\alpha|_v=|\mu|_v |\xi^{(w)}|_v < \frac{|\mu|_v}{\max\{1,|\mu|_v\}}\leq 1 \quad\text{if $v | \infty$ and $v \not= w$.}
\end{equation*}
Since $\sigma_w(\xi^{(w)})\in \R$ and $\sigma_w(\mu)\notin \R$ we conclude that $\sigma_w(\alpha)=\sigma_w(\mu)\sigma_w(\xi^{(w)})\notin \R$.
Therefore Lemma \ref{lemhaar2}, applied to $\alpha$ instead of  $\xi^{(w)}$, gives $k=K$.
Finally, using that $|\alpha|_v\leq 1$ for all places $v$ with $v\neq w$, we get 
\begin{alignat*}1
H(\alpha)=|\alpha|_w=|\mu|_w|\xi^{(w)}|_w 
&\leq  |\mu|_w c_K \prod_{\substack{v\mid \infty \\ v\neq w}}\max\{1,|\mu|_v\}\\
&\leq  c_K \prod_{v\mid \infty}\max\{1,|\mu|_v\} = c_K H(\mu).
\end{alignat*}

\section{Proof of Theorem \ref{thmearly3}}\label{Tmain2}

If $K$ has a real embedding then it follows from Theorem \ref{thmearly1} that the inequality (\ref{early35}) is false.  

Now suppose that $K$ has only complex embeddings and there exists $\mu \in O_K$ such that $H(\mu) \leq \hH$ and $\mu \notin F$.  
It follows that $\mu$ is not totally real.  And Theorem \ref{thmearly2} implies that there exists $\alpha$ in $O_K$ such that 
\begin{equation}\label{extra711}
K = \Q(\alpha),\quad\text{and}\quad H(\alpha) \le H(\mu) c_K \le \hH c_K.
\end{equation}
But the inequality (\ref{extra711}) contradicts the hypothesis (\ref{early35}) in the statement of Theorem \ref{thmearly3}.  We conclude that the
hypothesis (\ref{early35}) implies that $K$ is totally complex and that every $\mu \in O_K$ with $H(\mu) \leq \hH$ lies in $F$.  
It remains to prove that (\ref{early35}) also implies that $K/F$ is a Galois extension.

If the totally complex field  $K$ has exactly one archimedean place then $K$ is an imaginary quadratic extension of $\Q$.  In this case it is easy to
prove that $F = \Q$ and $K/\Q$ is Galois. For the remainder of the proof, we assume  that $K$ has at least two archimedean places,
that satisfy 
\begin{equation*}
\{\alpha\in O_K : H(\alpha) \leq \hH\} \subseteq F,
\end{equation*}
and satisfy the inequality (\ref{early35}).
Therefore $K$ satisfies the hypotheses of 
Lemma \ref{lemhaar1} and Lemma \ref{lemhaar2}.  For each archimedean place $w$ of $K$ we select a  nonzero  $\xi^{(w)}\in O_K$ that 
satisfies the inequalities (\ref{extra338}) and (\ref{extra345}) in the statement of Lemma \ref{lemhaar1}
for the choices $B_v=1$ (for $v\mid \infty$, $v\neq w$).  It follows from Lemma \ref{lemhaar1}
that $H(\xi^{(w)})\leq c_K\leq c_K\hH$.
Therefore the hypothesis (\ref{early35}) implies that
\begin{equation}\label{extra719}
k^{(w)} = \Q\bigl(\xi^{(w)}\bigr)
\end{equation}
is a proper subfield of $K$.
Hence, it follows from Corollary \ref{corhaar1} that $K/k^{(w)}$ is a 
Galois extension of order $2$ and
\begin{equation}\label{extra726}
\Aut\bigl(K/k^{(w)}\bigr) = \langle \sigma_w^{-1} \rho \sigma_w\rangle.
\end{equation}

Next we define the subfield
\begin{equation}\label{extra748}
\widetilde{F} = \bigcap_{w | \infty} k^{(w)} \subseteq K.
\end{equation}
Then it follows from Lemma \ref{lemhaar3} that $K/\widetilde{F}$ is a Galois extension and
\begin{equation}\label{extra755}
\Aut(K/\widetilde{F}) = \langle \sigma_w^{-1} \rho \sigma_w : \text{$w | \infty$ is a place of $K$}\rangle.
\end{equation}
If $\alpha$ belongs to $\widetilde{F}$ then it follows from (\ref{extra748}) that $\alpha$ belongs to $k^{(w)}$ at each archimedean place $w$ of $K$. 
And it follows from  (\ref{extra516}) that
\begin{equation*}
\sigma_w(\alpha) \in \sigma_w\bigl(k^{(w)}\bigr) \subseteq \R
\end{equation*}
at each archimedean place $w$ of $K$.  We conclude that $\widetilde{F}$ is totally real.

Finally, let $\beta$ be an element of $K$ that is totally real.  Then at each archimedean place $w$ of $K$ we have both 
$\sigma_w(\beta) \in \sigma_w(K)$ and $\sigma_w(\beta) \in \R$.  Applying  (\ref{extra516}) we find that
\begin{equation}\label{extra769}
\sigma_w(\beta) \in \sigma_w(K) \cap \R = \sigma_w\bigl(k^{(w)}\bigr)
\end{equation}  
at each archimedean place $w$ of $K$.  We conclude from (\ref{extra769}) that
\begin{equation*}
\beta \in \bigcap_{w | \infty} k^{(w)} = \widetilde{F},
\end{equation*} 
and therefore $\widetilde{F} = F$ is the maximal totally real subfield in $K$.

\section*{Acknowledgements}

The authors are  grateful to the anonymous referees for the careful reading and  helpful  suggestions.
We thank Dr. Michael Mossinghoff for insightful conversation about this project. We are grateful to Professor Hendrik Lenstra for pointing out an error in an earlier version of this manuscript. 
The authors acknowledge support from the Max Planck Institute for Mathematics in Bonn and support from the  Institute for 
Advanced Study in  Princeton.
Shabnam Akhtari's research was partially supported by the National Science Foundation Awards DMS-2001281 and DMS-2327098.

\end{document}